\newcommand{\T}{\mathcal{T}}
\newcommand{\Z}{\mathcal{Z}}
\newcommand{\IN}{\mathbb{N}}
\newcommand{\BigO}{\mathcal{O}}
\DeclareMathOperator*{\MSet}{\mbox{\sc MSet}}
\DeclareMathOperator*{\PSet}{\mbox{\sc PSet}}
\newtheorem{theorem}{Theorem}
\newtheorem{proposition}[theorem]{Proposition}
\newtheorem{fact}[theorem]{Fact}
\author{Antoine Genitrini\addressmark{1}}
\title{Full asymptotic expansion for P\'olya structures\thanks{
		This research was partially supported by the ANR MetACOnc project 
		ANR-15-CE40-0014.}}
\date{\today} 
\address{\addressmark{1} Sorbonne Universit\'es, UPMC Univ Paris 06,
  CNRS, LIP6 UMR 7606, 4 place Jussieu 75005 Paris. \\
  Email: {Antoine.Genitrini@lip6.fr}.
}
\keywords{Unlabelled non-plane trees; Full Puiseux expansion; Full asymptotic expansion; Analytic Combinatorics.}
\begin{document}

\maketitle

\begin{abstract}
In order to obtain the full asymptotic expansion for P\'olya trees,
i.e. rooted unlabelled and non-plane trees, Flajolet and Sedgewick observed
that their specification could be seen as a slight disturbance of the functional equation
satisfied by the Cayley tree function. Such an approach highlights the complicated formal expressions
with some combinatorial explanation. They initiated this process in their book
but they spared the technical part by only exhibiting the first-order approximation.
In this paper we exhibit the university of the method and obtain the full asymptotic expansions
for several varieties of trees.
We then focus on three different varieties of rooted, unlabelled and non-plane trees,
P\'olya trees, rooted identity trees and hierarchies,
in order to calculate explicitly their full singular expansions and asymptotic expansions.
\end{abstract}

\section{Introduction}\label{sec:intro}

By using either Darboux's method or singularity analysis, we easily get the dominant coefficients
of the asymptotic expansions for the number of some specific P\'olya structures; a P\'olya structure being
\begin{wrapfigure}[15]{r}{6cm}
\includegraphics[width=6cm]{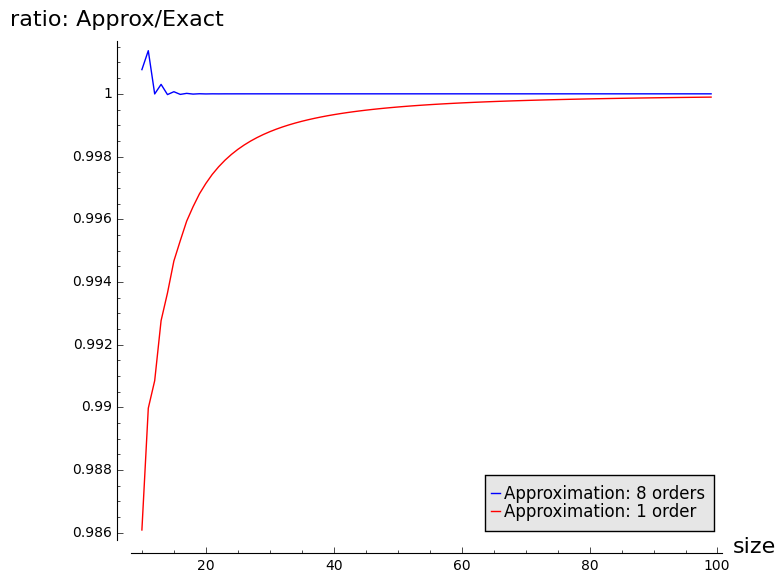}
\caption{Ratio between the approximations and the exact numbers of hierarchies}
\label{fig:approx}
\end{wrapfigure}
decomposable by using some P\'olya operators like the multiset $\MSet$ or
the powerset $\PSet$ constructions. 
For the numbers of hierarchies (a specific class of trees) of size $100$
the relative error between the exact number and the first-order approximation is only around $0.01\%$
(note that it is only $10^{-10}\%$ with an 8-order approximation).
However for small hierarchies, the first-order approximation is not precise: the relative error for the trees
of size $20$ is around $0.3\%$ whereas it is only around $0.0004\%$ with the $8$-order approximation
(cf. Fig.~\ref{fig:approx}).

In a technical report~\cite{Finch03}, Finch
provided recurrence formulas to compute all the coefficients in the asymptotic expansion for P\'olya trees.
He developed there the classical Darboux's method to derive the recurrences
and computed explicitly the five most important coefficients. 

According to Finch's report, Flajolet proposed at that time to study
the fundamental equation given
by the Weierstrass Preparation Theorem as, somehow, a slight disturbance of the functional equation
satisfied by the Cayley tree function. Using this point of view, the procedure to exhibit the full asymptotic
expansion is much more highlighted and the complicated formal expressions can be combinatorially 
understood. Flajolet and Sedgewick initiated this process in their book~\cite[p. 477]{FS09}
in the context of P\'olya trees but they spared the technical part of the proof by only exhibiting the first-order approximation.

In this paper, we explain why such an approach is generic to obtain easily the full asymptotic expansions 
for several varieties of trees. We focus on varieties that
can be seen as a disturbance of the Cayley function in the way
that they can be described by their generating function $T(z)$ as:
\[T(z) = \zeta(z) \exp(T(z)),\]
for some constrained function $\zeta(z)$. For such classes of trees,
we exhibit the full Puiseux (i.e., singular) expansion of the generating series. 
We then compute the generic full asymptotic expansion of the number of trees.
In Section~\ref{sec:calculsExplicites}, we then focus on three different varieties of rooted, unlabelled and non-plane trees.
The first class of trees is the classical set of P\'olya trees
that already appears in the papers of Cayley~\cite{BLW76}, P\'olya~\cite{Polya37}
and Otter~\cite{Otter48}. The generating function of P\'olya trees is easily described with a functional equation
using the multiset construction. By replacing the construction by the powerset operator
we get the class of rooted identity trees, the second class we are interested in. Such trees
are studied, for example, in the work of Harary \emph{et al.} in~\cite{HRS75}.
Finally we deal with hierarchies, i.e., rooted unlabelled non-plane trees without nodes
of arity~$1$. This class has been introduced by Cayley too,
but it is also directly linked to series-parallel networks
in the papers of Riordan and Shannon~\cite{RS42} and Moon~\cite{Moon87}.
In the Section~\ref{sec:approx}, we give numerical approximations for the first coefficients
of the singular and the asymptotic expansions of each specific variety of trees.
We conclude the paper (Section~\ref{sec:concl}) by mentioning several other structures
where our generic approach could be applied directly.


\section{Main results}\label{sec:theo}

For each of the varieties under consideration, 
the fundamental idea consists, from an analytic point of view,
at studying its generating function as a disturbance of the classical Cayley tree function (cf. e.g.~\cite[p. 127]{FS09}).
Let $C(z)$ be the Cayley tree function; it satisfies the functional equation 
\begin{equation}\label{eq:cayley}
  C(z) = z \cdot \exp(C(z)).
\end{equation}
Its dominant singularity is $1/e$ and $C(1/e) = 1$. 
Recall that the Cayley tree function is closely related to the Lambert W function.
Many fundamental results about this classical function
are given in the paper of Corless \emph{et al.}~\cite{CGHJK96}.

In order to obtain generically the full asymptotic expansion of the number of the structures of a variety of trees,
let us first compute the full Puiseux expansion (i.e., the full singular expansion) of the Cayley tree function and then 
study how the disturbance induced by a given variety modifies this behaviour.
Let us recall the definition of Bell polynomials, extensively studied
in Comtet's book~\cite{Comtet74} and denoted by $B_{n,k}(\cdot)$:
\[B_{n,k} \left(x_1, \dots, x_{n-k+1} \right) = \sum_{\scriptsize \begin{array}{c}
 c_1,\dots, c_{n-k+1} \geq 0 \\ \sum_i c_i = k\\ \sum_i i c_i = n
\end{array}}
\frac{n!}{c_1! \cdots c_{n-k+1}!} \left( \frac{x_1}{1!} \right)^{c_1} \cdots \left(\frac{x_{n-k+1}}{(n-k+1)!}\right)^{c_{n-k+1}}.\]
The Bell polynomials appear naturally in Fa\`a di Bruno's formula~\cite{Comtet74}
that states the value of iterated derivatives of the composition of two functions.

\begin{proposition}\label{prop:cayley}
The full Puiseux expansion of the Cayley tree function is
{\footnotesize 
\[
 C(z) \underset{z\rightarrow 1/e}=   1 - \sqrt{2}\sqrt{1-ez} - 
  \sum_{n\geq 2} \left( \sum_{k=1}^{n-1} (-1)^k B_{n-1,k}\left(\frac{1}{3}, \frac{1}{4}, \dots, \frac{1}{n-k+2}\right) \prod_{i=0}^{k-1} (n+2i) \right)
     \frac{2^{n/2}}{n!} \left(1-ez\right)^{n/2},
\]
}
where the functions $B_{n,k}(\cdot)$ are the Bell polynomials.
\end{proposition}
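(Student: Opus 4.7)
The plan is to reduce the singular expansion of $C(z)$ at $z=1/e$ to a Lagrange inversion of an analytic function, and then to read off the coefficients via Fa\`a di Bruno's formula expressed through Bell polynomials.

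First, I would substitute $y := 1 - C(z)$ into $C = z e^C$, obtaining $ez = (1-y) e^y$, hence
\[
1 - ez \;=\; h(y) \;:=\; 1 - (1-y)\,e^y \;=\; \sum_{n\geq 2}\frac{n-1}{n!}\,y^n \;=\; \frac{y^2}{2}\Bigl(1 + \tfrac{2y}{3} + \tfrac{y^2}{4} + \cdots\Bigr).
\]
Since $h$ has a double zero at the origin, $g(y) := \sqrt{2\,h(y)}$ is analytic near $y = 0$ with $g'(0) = 1$, so $t := \sqrt{2(1-ez)}$ is an analytic change of variable. Fixing the branch with $y \to 0^+$ as $z \to 1/e^-$, computing the Puiseux expansion of $C(z) - 1 = -y$ is equivalent to inverting $t = g(y)$ and evaluating $[t^n]\,y$ for every $n \geq 1$.

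Second, I would apply Lagrange inversion. Writing $g(y) = y\,\psi(y)$ with $\psi(y) = (1 + 2y/3 + y^2/4 + \cdots)^{1/2}$ and $\psi(0) = 1$ gives
\[
[t^n]\,y \;=\; \frac{1}{n}\,[y^{n-1}]\,\psi(y)^{-n} \;=\; \frac{1}{n}\,[y^{n-1}]\bigl(1 + \tilde\phi(y)\bigr)^{-n/2},
\qquad \tilde\phi(y) \;=\; \sum_{m\geq 1}\frac{2(m+1)}{(m+2)!}\,y^m.
\]
Then I would expand $(1+\tilde\phi)^{-n/2}$ by Fa\`a di Bruno: the outer function has derivatives $(-1)^k\prod_{i=0}^{k-1}(n+2i)/2^k$ at the origin, while $\tilde\phi^{(j)}(0) = 2/(j+2)$. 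The Bell polynomial homogeneity $B_{m,k}(\alpha x_1, \alpha x_2,\ldots) = \alpha^k B_{m,k}(x_1,x_2,\ldots)$ with $\alpha = 2$ converts the appearing $B_{m,k}(2/3, 2/4,\ldots)$ into $2^k B_{m,k}(1/3, 1/4,\ldots)$, precisely cancelling the $2^{-k}$ from the generalised binomial coefficients. Setting $m = n-1$ and passing back to the variable $\sqrt{1-ez}$ via $(1-ez)^{n/2} = t^n/2^{n/2}$ yields the announced closed form, with the linear term $-\sqrt{2}\sqrt{1-ez}$ corresponding to the $n=1$ case $[t^1]\,y = 1$.

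The computation is essentially routine once the substitution is in place; the main obstacle is bookkeeping, namely aligning three independent factors of $2$ (one from $t$ versus $\sqrt{1-ez}$, one from the generalised binomial series, and one absorbed by Bell polynomial scaling), and verifying that the summation range $1\leq k\leq n-1$ together with the exact list of arguments $1/3, 1/4,\ldots,1/(n-k+2)$ matches the number of variables on which $B_{n-1,k}$ actually depends. The existence of the expansion and the fact that only integer half-powers of $1-ez$ occur are guaranteed a priori by Weierstrass preparation at the square-root branch point, so the content of the proposition really lies in the explicit closed form of the coefficients.
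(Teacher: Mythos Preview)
Your proposal is correct and follows precisely the route the paper indicates: the paper does not spell out the proof of this proposition in the body, but signals its method by introducing Bell polynomials and Fa\`a di Bruno's formula immediately before the statement and by remarking that it extends the Flajolet--Sedgewick derivation ``with further precision.'' Your substitution $y=1-C(z)$, the square-root change of variable $t=\sqrt{2(1-ez)}$, the Lagrange inversion of $t=y\,\psi(y)$, and the Fa\`a di Bruno expansion of $(1+\tilde\phi)^{-n/2}$---with the $2^k$ from the Bell-polynomial homogeneity cancelling the $2^{-k}$ from the generalised binomial coefficients---reproduce exactly the closed form stated, including the argument list $1/3,\dots,1/(n-k+2)$ of $B_{n-1,k}$.
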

The calculation of the first terms of the singular expansion gives
{\footnotesize
\begin{align*}
C(z) \underset{z\rightarrow 1/e}= & 1 - \sqrt{2}\sqrt{1-ez} +\frac23(1-ez) - \frac{11}{36}\sqrt{2}(1-ez)^{3/2}  +\frac{43}{135}(1-ez)^2 - \\
&  \frac{769}{4320}\sqrt{2}(1-ez)^{5/2} + \frac{1768}{8505}(1-ez)^3   - \frac{680863}{5443200}\sqrt{2}(1-ez)^{7/2} + \BigO\left((1-ez)^{4}\right).
\end{align*}
}
Let us recall that the expansion until $\BigO((1-ez)^{3/2})$ has been derived in~\cite{FS09}.
We prove the full expansion with their approach but with further precision. 
Note that, in the formula of Proposition~\ref{prop:cayley},
the inner sum of $k$ can be factored in the same way as the classical Ruffini-Horner method for polynomial evaluation.
Doing so makes its computations much more efficient.\\

The second step consists in studying the ordinary generating function $T(z)=\sum_{n\geq 0} T_n z^n$ of the tree variety under consideration
as a disturbance of the Cayley tree function.
We follow the approach presented in~\cite[p. 477]{FS09} for P\'olya trees.
We assume the existence of a function $\zeta(z)$ such that
\begin{equation}\label{eq:zeta}
T(z) =  \zeta(z) \cdot \exp(T(z)).
\end{equation}

\begin{theorem}\label{theo:dvt_puiseux}
Let $\T$ be a variety of trees whose generating function is $T(z)$, and $\rho$ be 
its dominant singularity. If the generating function $T(z)$ satisfies the Equation \eqref{eq:zeta},
if the dominant singularity of~$\zeta(z)$ is strictly larger than $\rho$ and if $\zeta^{(1)}(\rho)\neq 0$,
then $T(z)$ satisfies the following full Puiseux expansion
\[T(z) \underset{z\rightarrow \rho}= 1 + \sum_{n\geq 1} t_n \left(1-\frac{z}{\rho}\right)^{n/2},\]
with $t_1 = -\sqrt{2e\rho\zeta^{(1)}(\rho)}$; and, for all $n>1$
\begin{align*}
t_n = - \frac{B(n)}{n!} \left(2e\rho\zeta^{(1)}(\rho)\right)^{n/2} &\; -  
  \sum_{\begin{array}{c} \text{ \scriptsize $\ell = 1$}\\[-1ex] \text{ \scriptsize $n \equiv \ell \bmod 2$} \end{array}}^{\text{ \scriptsize $n-1$}}
(-1)^{(n-\ell)/2}\rho^{n/2} \cdot \frac{B(\ell)}{\ell!} \left(2e\zeta^{(1)}(\rho)\right)^{\ell/2}\\
& \hspace*{10mm}
 \cdot\sum_{r = 1}^{\frac{n-\ell}{2}}  \binom{\ell/2}{r} \frac{1}{(\zeta^{(1)}(\rho))^r} 
\sum_{\begin{array}{c} \text{ \scriptsize $i_1, \dots, i_r \geq 1$}\\ \text{ \scriptsize $\sum_j i_j = \frac{n-\ell}{2}$} \end{array}}
\frac{\zeta^{(i_1+1)}(\rho)}{(i_1+1)!} \cdots \frac{\zeta^{(i_r+1)}(\rho)}{(i_r+1)!},
\end{align*}
where $\zeta^{(i)}(z)$ stands for the $i$th derivative of $\zeta(z)$, $B(1) = 1$, and for all $\ell>1$,
\[B(\ell) =\sum_{k=1}^{\ell-1} (-1)^k B_{\ell-1,k}\left(\frac{1}{3}, \frac{1}{4}, \dots, \frac{1}{\ell-k+2}\right) \prod_{i=0}^{k-1} (\ell +2i).\]
\end{theorem}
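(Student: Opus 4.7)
The plan is to rewrite $T$ as a composition $C\circ\zeta$ and then feed the known Puiseux expansion of $C$ into this composition. The starting observation is that the functional equation $T(z)=\zeta(z)\exp(T(z))$, together with the Cayley relation $C(w)=w\exp(C(w))$, yields the identity $T(z)=C(\zeta(z))$: both sides satisfy the same fixed-point equation and agree at $z=0$, hence coincide locally by analytic uniqueness. Under the hypotheses, the dominant singularity of $\zeta$ is strictly larger than $\rho$, so $T$ can become singular at $\rho$ only through $C$; this forces $\zeta(\rho)=1/e$, hence $T(\rho)=C(1/e)=1$, and the assumption $\zeta^{(1)}(\rho)\neq 0$ guarantees that $\zeta$ looks locally linear at~$\rho$, triggering the square-root singularity inherited from~$C$.

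Setting $u=\sqrt{1-z/\rho}$ and, following Proposition~\ref{prop:cayley}, unifying the singular expansion of $C$ with the convention $B(1)=1$ gives
\[C(\zeta(z))=1-\sum_{n\geq 1}\frac{B(n)}{n!}\,2^{n/2}\bigl(1-e\zeta(z)\bigr)^{n/2}.\]
The computation then reduces to expanding each $(1-e\zeta(z))^{n/2}$ in powers of~$u$ and collecting like terms. The crucial step is to expand $1-e\zeta(z)$ around $\rho$ using $\zeta(\rho)=1/e$ and $z-\rho=-\rho u^{2}$, which gives
\[1-e\zeta(z)=e\rho\,\zeta^{(1)}(\rho)\,u^{2}\,\Bigl(1+\sum_{k\geq 1}c_{k}\,u^{2k}\Bigr),\qquad c_{k}=\frac{(-1)^{k}\rho^{k}\,\zeta^{(k+1)}(\rho)}{\zeta^{(1)}(\rho)\,(k+1)!}.\]
Raising to the power $n/2$ via the generalized binomial theorem and expanding each $\bigl(\sum_{k}c_{k}u^{2k}\bigr)^{r}$ as a sum over compositions $i_{1}+\cdots+i_{r}=m$ with $i_{j}\geq 1$ produces an explicit expression for $[u^{2m}](1-e\zeta(z))^{n/2}$ in terms of the derivatives of $\zeta$ at~$\rho$.

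It remains to extract $t_{N}=[u^{N}]\,C(\zeta(z))$. Only summands with $\ell\equiv N\pmod 2$ contribute, and the leading factor $u^{\ell}$ forces $\ell\leq N$. The diagonal term $\ell=N$ immediately produces $-B(N)(2e\rho\zeta^{(1)}(\rho))^{N/2}/N!$, giving the first line of the statement. Each off-diagonal $\ell<N$ contributes with $m=(N-\ell)/2$; gathering the sign $(-1)^{m}=(-1)^{(N-\ell)/2}$ from the product $c_{i_{1}}\cdots c_{i_{r}}$ and using $\rho^{\ell/2+m}=\rho^{N/2}$ reproduces the announced inner sum. I expect the main obstacle to be bookkeeping rather than conceptual: carefully matching the four nested summation ranges $(\ell,r,i_{1},\dots,i_{r})$ and verifying that the signs, factorials, and powers of $\rho$ collapse into exactly the form stated in the theorem.
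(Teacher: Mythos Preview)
Your proposal is correct and follows exactly the strategy the paper indicates: write $T(z)=C(\zeta(z))$, then compose the Puiseux expansion of $C$ at $1/e$ with the Taylor expansion of $\zeta$ at $\rho$. In fact you have supplied considerably more detail than the paper's own proof, which only records this composition idea as a ``key idea'' and leaves the bookkeeping to the reader; your expansion of $1-e\zeta(z)$, the binomial-in-$u^{2}$ step, and the collection of terms by parity are precisely what is needed to recover the stated formula for~$t_n$.
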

\begin{proof}[key idea]
The complete proof 
follows the strategy of Flajolet and Sedgewick.
The main idea is to compose the Puiseux expansion of
$C(z)$ at the singularity $1/e$ and the analytic expansion of $\zeta(z)$ at
the dominant singularity of $T(z)$.
\end{proof}

In Theorem~\ref{theo:dvt_puiseux}, the assumption $\zeta^{(1)}(\rho)\neq 0$ could be
replaced by a weaker assumption that there exists an integer $r>0$
such that $\zeta^{(r)}(\rho)\neq 0$.
Making this weaker assumption would however make the proof a bit more technical
without adding substantial information.

The the first terms of the singular expansion of $T(z)$ are given by
{\scriptsize
\begin{align*}
T(z) \underset{z\rightarrow \rho}= &
1 - \sqrt{2 e \rho \zeta^{(1)}(\rho)} \sqrt{1 - \frac{z}{\rho}} + \frac{2 e \rho \zeta^{(1)}(\rho)}{3} \left(1 - \frac{z}{\rho}\right) 
-  \left(\frac{11\sqrt{2}(e \rho \zeta^{(1)}(\rho))^{3/2}}{36} - \frac{\sqrt{2 e }\rho^{3/2} \zeta^{(2)}(\rho)}{4\sqrt{\zeta^{(1)}(\rho)}}\right) \left(1 - \frac{z}{\rho}\right)^{3/2}\\
& + \left( \frac{43 (e \rho \zeta^{(1)}(\rho))^{2}}{135} - \frac{e (\rho \zeta^{(2)}(\rho))^{2}}{3}\right)  \left(1 - \frac{z}{\rho}\right)^{2} - 
 \left(\frac{769\sqrt{2} (e \rho \zeta^{(1)}(\rho))^{5/2}}{4320} - \frac{11 \sqrt{2} \rho^{5/2}  (e \zeta^{(1)}(\rho))^{3/2}   \zeta^{(2)}(\rho)}{48  \zeta^{(1)}(\rho)}   \right.\\
& \left. - \frac{\sqrt{2} \rho^{5/2}\sqrt{e \zeta^{(1)}(\rho)}}{96} \left(\frac{3 (\zeta^{(2)}(\rho))^{2}}{ (\zeta^{(1)}(\rho))^{2}}
 - \frac{8  \zeta^{(3)}(\rho)}{ \zeta^{(1)}(\rho)}\right)\right)  \left(1 - \frac{z}{\rho}\right)^{5/2}+ \BigO\left(\left(1 - \frac{z}{\rho}\right)^{3}\right).
\end{align*}
}
We are now ready to compute
the full asymptotic expansion for the class $\T$.
\begin{theorem}\label{theo:dvt_nb_trees}
Let $\T$ be a variety of trees whose generating function is $T(z)$, and $\rho$ be 
its dominant singularity. If the generating function $T(z)$ satisfies the Equation \eqref{eq:zeta}, if
the dominant singularity of~$\zeta(z)$ is strictly larger than $\rho$ and if $\zeta^{(1)}(\rho)\neq 0$,
then asymptotically when $n$ tends to infinity,
\[
T_n  \underset{n\rightarrow \infty}\sim \frac{\rho^{-n}}{\sqrt{\pi n^3}} \sum_{\ell \geq 0} \frac{1}{n^\ell}\cdot \left( \sum_{r = 1}^{\ell+1} Q_r R_{\ell+1-r}\right),
\]
where
\[Q_r =   \sum_{j=0}^{r-1} (-1)^{j+1} t_{2j+1} 
         \sum_{\begin{array}{c} \text{\scriptsize $\ell_0, \dots, \ell_j \geq 1$}\\ \text{\scriptsize $\sum_i \ell_i= r$} \end{array}}
           \prod_{i=0}^j \left( i + \frac12 \right)^{\ell_j} \qquad \text{for all } r>0; \]
with the sequence $(t_i)$ defined in Theorem~\ref{theo:dvt_puiseux}, 
$R_0 = 1$ and
\[
 R_{\ell} = \sum_{\begin{array}{c} \text{\scriptsize $r = 1$}\\ \text{\scriptsize $r \equiv \ell \bmod 2 $} \end{array}}^\ell
     \sum_{\begin{array}{c} \text{\scriptsize $k_1, \dots, k_r \geq 1$}\\ \text{\scriptsize $\sum_j k_j  = \frac{\ell+r}{2}$} \end{array}}
       \prod_{i=1}^r \frac{(2^{-2k_i}-1)  \sum_{s=0}^{2k_i} \frac{1}{s+1} \sum_{j=0}^s (-1)^j \binom{s}{j} j^{2k_i} }{( \ell -2 k_1 - \cdots - 2k_{i-1} + i-1) k_i} \qquad \text{for all } \ell > 0.
\]
\end{theorem}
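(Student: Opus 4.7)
The plan is to apply singularity analysis (\cite[Ch.~VI]{FS09}) to the full Puiseux expansion given by Theorem~\ref{theo:dvt_puiseux}, then reorganise the resulting series into the two Cauchy factors $Q_r$ and $R_\ell$. Because $(1-z/\rho)^k$ for integer $k\geq 0$ is a polynomial in $z$, its coefficient $[z^n]$ vanishes for $n$ large enough; consequently only the half-integer exponents, i.e.\ the terms indexed by odd $m = 2j+1$, contribute to the asymptotic expansion at any order. For each such term, the transfer theorem yields
\[
[z^n](1-z/\rho)^{(2j+1)/2} \;=\; \rho^{-n}\cdot\frac{\Gamma(n-j-1/2)}{\Gamma(-j-1/2)\,\Gamma(n+1)}.
\]

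I would then split this Gamma ratio into a $j$-dependent factor and a universal factor. Applying $\Gamma(x+1) = x\Gamma(x)$ repeatedly on both $\Gamma(n-j-1/2)$ and $\Gamma(-j-1/2)$ gives
\[
\frac{1}{\Gamma(-j-1/2)}\cdot\frac{\Gamma(n-j-1/2)}{\Gamma(n+1)} \;=\; \frac{(-1)^{j+1}}{\sqrt{\pi}}\cdot\frac{\Gamma(n+1/2)}{\Gamma(n+1)}\cdot\prod_{i=0}^{j}\frac{i+1/2}{n-(i+1/2)}.
\]
Each factor of the finite product expands as the geometric series $\sum_{\ell_i\geq 1}((i+1/2)/n)^{\ell_i}$. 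Summing the resulting joint expansion over all $j\geq 0$ weighted by $t_{2j+1}$, and grouping by the total order $r = \sum_i \ell_i$, produces exactly $\sum_{r\geq 1}Q_r/n^r$ with the coefficients $Q_r$ stated in the theorem; the sign $(-1)^{j+1}$ and the product $\prod(i+1/2)^{\ell_i}$ at $\ell_i\geq 1$ are both accounted for in this step.

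The remaining universal factor $\Gamma(n+1/2)/\Gamma(n+1)$ admits an asymptotic expansion $n^{-1/2}\sum_{\ell\geq 0}R_\ell/n^{\ell}$. To identify the $R_\ell$ explicitly, I would start from Stirling's series $\log\Gamma(z+a) \sim (z+a-1/2)\log z - z + \frac12\log(2\pi) + \sum_{s\geq 1}(-1)^{s+1}B_{s+1}(a)/(s(s+1)z^s)$ at $a \in \{1/2,1\}$; using $B_n(1/2) = (2^{1-n}-1)B_n$ and $B_n(1)=B_n$ for $n\geq 2$, only odd $s$ survives in the difference, which collapses to
\[
\log\!\left(\sqrt{n}\,\frac{\Gamma(n+1/2)}{\Gamma(n+1)}\right) \;=\; \sum_{k\geq 1}\frac{(2^{-2k}-1)B_{2k}}{k(2k-1)}\,n^{-(2k-1)}.
\]
Logarithmic differentiation converts this identity into the linear recurrence $\ell R_\ell = \sum_{k\geq 1}\frac{(2^{-2k}-1)B_{2k}}{k}R_{\ell+1-2k}$ with $R_0 = 1$, which upon iteration unfolds into the nested sum of the theorem; the parity constraint $r\equiv\ell\pmod 2$ and the range $r\leq\ell$ fall out of counting the iteration steps. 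Replacing $B_{2k_i}$ by its Worpitzky-type expression $\sum_{s=0}^{2k_i}\frac{1}{s+1}\sum_{j=0}^{s}(-1)^j\binom{s}{j}j^{2k_i}$ yields the stated closed form.

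Finally, the Cauchy product of $\sum_{r\geq 1}Q_r/n^r$ and $n^{-1/2}\sum_{s\geq 0}R_s/n^s$, together with the overall prefactor $\rho^{-n}/\sqrt{\pi}$ and the shift $\ell = r + s - 1$, delivers the claimed full asymptotic expansion. The main obstacle will be this third step: extracting the exact $(2^{-2k}-1)B_{2k}$ factor through the Bernoulli polynomial identities and combinatorially unrolling the linear recurrence into the explicit compositional form; the remaining reindexings are routine.
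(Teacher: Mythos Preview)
Your argument is correct and is exactly the route the paper's presentation points to: singularity analysis applied term-by-term to the Puiseux expansion of Theorem~\ref{theo:dvt_puiseux}, with the Gamma ratio split into a $j$-dependent product (giving the $Q_r$) and the universal factor $\Gamma(n+1/2)/\Gamma(n+1)$ (giving the $R_\ell$), the latter identified via the Stirling series and the Bernoulli-polynomial identities $B_n(1/2)=(2^{1-n}-1)B_n$, $B_n(1)=(-1)^nB_n$.

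There is in fact nothing to compare against: the paper states Theorem~\ref{theo:dvt_nb_trees} and immediately passes to the explicit first terms, omitting any proof. Your decomposition recovers precisely the structure of the stated $Q_r$ and $R_\ell$ (note incidentally that the exponent $\ell_j$ in the paper's product $\prod_{i=0}^j(i+\tfrac12)^{\ell_j}$ is a typo for $\ell_i$, and your geometric-series expansion produces the correct $\ell_i$), and the unrolling of the recurrence $\ell R_\ell=\sum_{k\geq1}\frac{(2^{-2k}-1)B_{2k}}{k}R_{\ell+1-2k}$ matches the denominators $(\ell-2k_1-\cdots-2k_{i-1}+i-1)$ and the constraint $\sum k_j=(\ell+r)/2$ in the stated formula. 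Nothing is missing.
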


In particular, the first few terms in the asymptotic expansion of $T_n$ are given by
{\footnotesize
\begin{align*}
T_n  \underset{n\rightarrow \infty}= \frac{\rho^{-n}}{\sqrt{\pi n^3}} &\left(
-\frac{t_1}{2} -\frac{3(t_1 - 4t_3)}{16n} - \frac{5(5t_1 - 72t_3 + 96t_5)}{256n^2}
- \frac{105(t_1 - 44t_3 + 160t_5 - 128t_7)}{2048n^3}  \right.\\
& \hspace*{3mm} \left. - \frac{21(79t_1 - 10800t_3 + 81600t_5 - 161280t_7 + 92160t_9)}{65536n^4} + \BigO\left(\frac{1}{n^5}\right)\right),
\end{align*}
}
where the $t_i$'s are given in the Theorem~\ref{theo:dvt_puiseux}.

%
%

\section{Different varieties of rooted unlabelled and non-plane trees}\label{sec:calculsExplicites}

In the following three sections, we will show how both Theorems~\ref{theo:dvt_puiseux} and~\ref{theo:dvt_nb_trees}
directly apply to three families of trees, namely the P\'olya trees, the rooted identity trees and the hierarchies.
In each of these sections, we will use the same notations $\T$, $T(z)$ and $\zeta(z)$
to refer to the family of considered trees.

For each of the three examples, we proceed in two steps.
First we focus on efficient recurrences in order to
compute the first numbers of the sequence $(T_n)_{n\in \IN}$
that encodes for each positive integer $n$ the number of trees of size $n$.
Second, by using the numerical procedure given in~\cite[p. 477]{FS09},
we compute an approximation of the dominant singularity of $T(z)$.

Finally, at the end of the section, we exhibit two Tables~\ref{table:approxPuiseux} and~\ref{table:approxNBTrees}
to compare the numerical approximations (according to each class of trees) of the coefficients
given in the Theorems~\ref{theo:dvt_puiseux} and~\ref{theo:dvt_nb_trees}.
We also exhibit the typical gain in the relative error obtained by using a more precise asymptotic approximation.

\subsection{P\'olya trees}\label{sec:polya}

A P\'olya tree is a rooted unlabelled and non-plane tree. Let us denote by $\T$ the set of P\'olya trees.
It satisfies the following unambiguous specification :
\[\T = \Z \times \MSet \T,\]
because a P\'olya tree is by definition a root, specified by $\Z$ (of size $1$),
followed by a multiset of P\'olya trees (we refer the reader to~\cite{FS09} for more details).
By the \emph{symbolic method} (cf.~\cite{FS09}), we get 
\begin{equation}\label{eq:Polya_fun_eq}
  T(z) = z \exp\left( \sum_{i>0} \frac{T(z^i)}{i}\right),
\end{equation}
with $T(z)$ being the ordinary generating function enumerating $\T$.
The latter formula already appears
in P\'olya's paper~\cite{Polya37} and has been sketched
by Cayley (\cite[p. 67]{BLW76}) as an introduction to the 
counting theory for unlabelled objects. This method takes into account symmetries of the objects
and thus quantifies isomorphisms.
We have a classical alternative definition: cf. e.g.~ \cite[p. 71]{FS09}.
\begin{equation}\label{eq:Polya_produit}
  T(z) = z \cdot \prod_{n>0} \frac{1}{(1-z^n)^{T_n}},
\end{equation}
with $T_n$ the number of trees of size $n$ in $\T$.
Some combinatorial arguments, given in~\cite[p. 27--30]{FS09}, prove that both definitions are equivalent.
From the latter Equation~\eqref{eq:Polya_produit}, we deduce a recurrence for the sequence $(T_n)_{n\in \IN}$
for P\'olya trees.
\begin{fact}\label{rec:otter}
The sequence $(T_n)_{n\in\IN}$ enumerating P\'olya trees satisfies
\[T_n = \left\lbrace 
    \begin{array}{l l}
        n & \text{if } n \in\{0,1\} \\ 
        \displaystyle{\frac{1}{n-1} \sum_{i=1}^{n-1}  i T_i \left( \displaystyle{\sum_{m=1}^{\lfloor \frac{n-1}{i}\rfloor} T_{n-mi} } \right)} & \text{if } n>1.
    \end{array}
    \right. \]
\end{fact}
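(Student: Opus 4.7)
The plan is to derive the recurrence directly from the product form~\eqref{eq:Polya_produit} by logarithmic differentiation, a standard technique for Euler-type products. First I would take the logarithm to get
\[\log T(z) = \log z + \sum_{i\geq 1} T_i \log\frac{1}{1-z^i},\]
then differentiate term by term and multiply through by $zT(z)$ to obtain
\[z T'(z) - T(z) = T(z) \sum_{i \geq 1} i\, T_i \, \frac{z^i}{1-z^i}.\]

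Next I would expand each geometric factor as $z^i/(1-z^i) = \sum_{m\geq 1} z^{mi}$ and extract the coefficient of $z^n$ on both sides. On the left, using $[z^n]\, zT'(z) = n T_n$, one gets $(n-1)T_n$. On the right, a Cauchy product yields
\[\sum_{i\geq 1} i\, T_i \sum_{m\geq 1} T_{n-mi},\]
with the convention $T_j = 0$ for $j < 0$. Since $T_0 = 0$, the terms with $n - mi = 0$ (which occur exactly when $i \mid n$) vanish, so the inner sum truncates at $m = \lfloor (n-1)/i \rfloor$, and for the same reason the outer sum has no contribution from $i \geq n$ and can be cut at $i = n-1$. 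Dividing by $n-1$, which is nonzero for $n > 1$, gives the announced formula; the base cases $T_0 = 0$ and $T_1 = 1$ follow directly from the specification $\T = \Z \times \MSet\,\T$.

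The entire derivation is essentially mechanical once Equation~\eqref{eq:Polya_produit} is available, as it is a purely formal power series manipulation. The only mildly delicate point will be the index bookkeeping: justifying via $T_0 = 0$ how the summation ranges collapse to $1 \leq i \leq n-1$ and $1 \leq m \leq \lfloor (n-1)/i \rfloor$, which is precisely what makes the recurrence an effective device for computing $T_n$ from $T_1, \ldots, T_{n-1}$.
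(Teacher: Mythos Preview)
Your proposal is correct and follows essentially the same approach as the paper: logarithmic differentiation of the Euler product~\eqref{eq:Polya_produit}, followed by coefficient extraction. The only cosmetic difference is that the paper first writes the convolution via the divisor sum $\sum_{m\mid i} m T_m$ and then interchanges the two summations, whereas you expand $z^i/(1-z^i)$ as a geometric series and handle the truncation of the indices directly through $T_0=0$; both routes arrive at the same identity.
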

This result is given as an exercise by Knuth~\cite[p. 395]{Knuth97vol1}.
Furthermore, Otter~\cite{Otter48} proved a very similar recurrence for unrooted trees.
The first values of the sequence, given in OEIS\footnote{OEIS: On-line Encyclopedia of Integer Sequences} sequence A000081,
are
\[0, 1, 1, 2, 4, 9, 20, 48, 115, 286, 719, 1842, 4766, 12486, 32973, 87811, \dots\]
The number of P\'olya trees from each size from $1$ to $n$ can be
computed in $\BigO(n^2)$ arithmetic operations (by using memoization).

\begin{proof}[of Fact~\ref{rec:otter}]
Several authors, in particular,
Flajolet and Sedgwick obtained such a recurrence by using the logarithmic derivative of $T(z)$:
for all $n>1$
\[z\frac{T'(z)}{T(z)} = 1 + \sum_{n>0} T_n \frac{n z^{n}}{1-z^n}.\]
We rewrite this equation as
\[z T'(z) = \left( 1 + \sum_{n>0} n T_n \frac{z^{n}}{1-z^n} \right) T(z).\]
Extracting the $n$-th coefficient of the generating functions gives:
\[ n T_n = T_n + \sum_{i=1}^{n-1}\left( [z^i] \sum_{m>0} m T_m \frac{z^m}{1-z^m} \right) T_{n-i}.\]
Since $[z^k] (1-z^m)^{-1}$ equals $1$ if $m$ divides $k$ and 0 otherwise, 
we get
\[ (n-1) T_{n} = \sum_{i=1}^{n-1} \left( \sum_{m | i} m T_m \right) T_{n-i}.\]
The notation $m | i$ corresponds to the condition that the integer $m$ divides the integer $i$.
The stated formula is obtained by interchanging the two sums.
\end{proof}

By using Flajolet and Sedgewick's numerical procedure (cf.~\cite[p. 477]{FS09}) with $n=200$ terms,
we get the following $50$-digits approximation of $\rho$:
\[\rho \approx 0.33832185689920769519611262571701705318377460753297\ldots\]

We are now interested in the full Puiseux expansion of the generating function of P\'olya trees.
In view of Equations~\eqref{eq:zeta} and~\eqref{eq:Polya_fun_eq}, we define have
\begin{equation}\label{eq:polya}
T(z) =\zeta(z) \cdot \exp(T(z)), \qquad \text{where } \zeta(z) =  z\cdot \exp\left(\sum_{n\geq 2} \frac{T(z^n)}{n}\right).
\end{equation}

\begin{fact}\label{prop:polya}
The function $\zeta(z)$ defined for P\'olya trees satisfies the assumptions of the 
Theorems~\ref{theo:dvt_puiseux} and~\ref{theo:dvt_nb_trees}.
\end{fact}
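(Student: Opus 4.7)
The plan is to verify the two nontrivial hypotheses of Theorem~\ref{theo:dvt_puiseux}: that the dominant singularity of $\zeta(z)$ is strictly greater than $\rho$, and that $\zeta^{(1)}(\rho)\neq 0$. The defining relation $T(z)=\zeta(z)\exp(T(z))$ already holds by construction from Equation~\eqref{eq:polya}.

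For the singularity of $\zeta$, I would first observe that since the radius of convergence of $T$ is $\rho\approx 0.338<1$, the function $z\mapsto T(z^n)$ is analytic in the disk $|z|<\rho^{1/n}$; in particular, for every $n\geq 2$ one has $\rho^{1/n}\geq \sqrt{\rho}>\rho$. It then remains to show that the series $g(z):=\sum_{n\geq 2}T(z^n)/n$ itself defines an analytic function on the larger disk $|z|<\sqrt{\rho}$. For this I would pick any $r<\sqrt{\rho}$ and note that on the compact disk $|z|\leq r$ one has $|z^n|\leq r^n$ with $r^n\to 0$. Since $T$ is analytic at the origin with $T(0)=0$, there is a constant $K$ such that $|T(w)|\leq K|w|$ for $|w|$ sufficiently small, so $|T(z^n)/n|\leq Kr^n/n$ for all $n$ large enough, and the remaining finitely many terms are bounded by their maxima on $|z|\leq r$. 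Uniform convergence of the tail then yields analyticity of $g(z)$, hence of $\zeta(z)=z\exp(g(z))$, on $|z|<\sqrt{\rho}$, which in turn exceeds $\rho$.

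For the non-vanishing of $\zeta^{(1)}(\rho)$, I would differentiate directly:
\[
\zeta^{(1)}(z)=\exp(g(z))\bigl(1+zg^{(1)}(z)\bigr),\qquad g^{(1)}(z)=\sum_{n\geq 2}z^{n-1}T^{(1)}(z^n).
\]
At $z=\rho$ the factor $\exp(g(\rho))=\zeta(\rho)/\rho$ is strictly positive (in fact equal to $1/(e\rho)$, using $T(\rho)=1$ and the relation $\zeta(\rho)=\rho e^{g(\rho)}$ together with the specification $T(z)=\zeta(z)e^{T(z)}$). The second factor is
\[
1+\rho g^{(1)}(\rho)=1+\sum_{n\geq 2}\rho^{n}\,T^{(1)}(\rho^{n}),
\]
and since the coefficients of $T$ are nonnegative, so are those of $T^{(1)}$; moreover $\rho^n<\rho$ lies strictly inside the disk of convergence of $T^{(1)}$, so each summand is a well-defined nonnegative real number. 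Consequently $1+\rho g^{(1)}(\rho)\geq 1>0$, and hence $\zeta^{(1)}(\rho)>0$.

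The only mildly delicate step is the first one, namely rigorously establishing analyticity of the infinite sum defining $g(z)$ on a disk strictly larger than $\rho$; the non-vanishing of $\zeta^{(1)}(\rho)$ then follows almost immediately from the positivity of all Taylor coefficients of $T$ at the origin. Once Fact~\ref{prop:polya} is established, Theorems~\ref{theo:dvt_puiseux} and~\ref{theo:dvt_nb_trees} apply verbatim to the generating function of P\'olya trees.
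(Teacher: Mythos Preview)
Your proof is correct and follows essentially the same route as the paper's: show that $\zeta$ is analytic on $|z|<\sqrt{\rho}$, use $\rho<1$ to conclude $\sqrt{\rho}>\rho$, and check $\zeta'(\rho)>0$ via positivity of the coefficients. The only noteworthy difference is that the paper justifies $\rho<1$ theoretically by comparing coefficients with the Cayley function (since $[z^n]T(z)\geq [z^n]C(z)$ one gets $\rho\leq 1/e$), whereas you invoke the numerical value $\rho\approx 0.338$; if you want the argument to be fully self-contained, you should include that comparison rather than rely on the numerics.
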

This fact has already been proved by Cayley as mentioned in~\cite[p. 67]{BLW76}.
We recall here the arguments given in~\cite[p. 477]{FS09}.

\begin{proof}
The definition of $\zeta(z)$ given in Equation~\eqref{eq:polya}
implies that its dominant singularity is $\sqrt{\rho}$,
(with the constant $\rho$ being the dominant singularity of $T(z)$).
Since $1/e$ is the dominant singularity of the Cayley tree function $C(z)$
and $[z^n] T(z) > [z^n] C(z)$ (by using Equation~\eqref{eq:Polya_fun_eq})
for $n$ sufficiently large, we get $\rho \leq 1/e$.
Thus $\sqrt{\rho} > \rho$ and we finally infer that the function $\zeta(z)$
is analytic beyond the disc of convergence of $T(z)$.
Finally we easily get $\zeta'(\rho) >0$.
\end{proof}

Theorem~\ref{theo:dvt_puiseux} and the above approximation for $\rho$ give
the first coefficients for the Puiseux expansion of P\'olya trees presented in the Table~\ref{table:approxPuiseux}.
The computations of the numbers $t_i$'s have been done with an approximation of the function $\zeta(z)$,
computed with the truncation of the series $T(z)$ after the $100$-th first coefficients.
Experimentally, it seems that the accuracy is actually much larger than the $20$ digits given in Table~\ref{table:approxPuiseux}.

Finally the previous approximations and the result of Theorem~\ref{theo:dvt_nb_trees} give
{\footnotesize
\begin{align*}
T_n  \underset{n\rightarrow \infty}= \frac{\rho^{-n}}{\sqrt{\pi n^3}} &\left(
0.7797450101873204419\ldots +\frac{0.07828911261061096133\ldots}{n} + \frac{0.3929402676631860168\ldots}{n^2} + \right. \\
& \hspace*{3mm} \left. \frac{1.537879315978838092\ldots}{n^3} +  \frac{8.200844090435596194\ldots}{n^4} + \BigO\left(\frac{1}{n^5}\right)\right).
\end{align*}
}
Note that, from here, it is then easy to get back the first evaluations exhibited by Finch~\cite{Finch03}.

\subsection{Rooted identity trees}

A rooted identity tree is a rooted unlabelled (non-plane) tree for which the only automorphism preserving the root node is the identity.
Harary~\emph{et al.} studied this class of trees in~\cite{HRS75}. In his book~\cite{Finch03book}, Finch also mentions this class.
Intuitively, whereas a P\'olya tree can be seen as a root followed by a multiset of P\'olya trees,
a rooted identity tree can be seen as a root followed by a set of rooted identity trees (i.e., no repetition is allowed).
Let us denote by $\T$ the set of rooted identity trees.
It satisfies the following unambiguous specification 
\[\T = \Z \times \PSet \T.\]
The symbolic method gives the functional equation
\[T(z) = z \exp\left( \sum_{i>0} (-1)^{i-1}\frac{T(z^i)}{i}\right).\]
An equivalent formula for the function $T(z)$ is
\[ T(z) = z \cdot \prod_{n>0} (1+z^n)^{T_n}.\]
In order to obtain an efficient recurrence relation satisfied by the numbers of rooted identity tree, 
we use the same strategy as above (for P\'olya trees), and thus obtain:
\begin{proposition}\label{rec:identity}
The sequence $(T_n)_{n\in\IN}$ enumerating rooted identity trees satisfies
\[T_n = \left\lbrace 
    \begin{array}{l l}
        n & \text{if } n \in\{0,1\} \\ 
       \displaystyle{ \frac{1}{n-1} \sum_{i=1}^{n-1}  i T_i \left( \displaystyle{\sum_{m=1}^{\lfloor \frac{n-1}{i}\rfloor} (-1)^{m+1} T_{n-mi} } \right)} & \text{if } n>1.
    \end{array}
    \right. \]
\end{proposition}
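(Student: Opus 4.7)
The plan is to mirror exactly the argument given for Fact~\ref{rec:otter}, the only difference being that the multiset Euler product $(1-z^n)^{-T_n}$ is replaced by the powerset counterpart $(1+z^n)^{T_n}$. Starting from the alternative product form $T(z) = z \prod_{n>0}(1+z^n)^{T_n}$, I would take the logarithmic derivative to produce
\[z\frac{T'(z)}{T(z)} = 1 + \sum_{n>0} n T_n \frac{z^n}{1+z^n},\]
rewrite this as $zT'(z) = \bigl(1 + \sum_{n>0} nT_n z^n/(1+z^n)\bigr) T(z)$, and extract $[z^n]$ on both sides to obtain
\[(n-1)T_n = \sum_{i=1}^{n-1} \left([z^i] \sum_{m>0} m T_m \frac{z^m}{1+z^m}\right) T_{n-i}.\]

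The next step is the only place where the identity-tree computation departs from the P\'olya case. Expanding $(1+z^m)^{-1}$ as a geometric series gives $[z^k]\frac{z^m}{1+z^m} = (-1)^{k/m-1}$ when $m$ divides $k$ (with $k\geq m$), and $0$ otherwise. This is precisely the source of the sign alternation: whereas the analogous coefficient was uniformly $+1$ in the P\'olya case, here it carries the factor $(-1)^{k/m-1}$. Substituting back yields
\[(n-1) T_n = \sum_{i=1}^{n-1}\left( \sum_{m\mid i} m T_m \, (-1)^{i/m-1}\right) T_{n-i}.\]
Finally, swapping the two sums — for each fixed $m$, letting $i = m\ell$ range over $\ell = 1, \dots, \lfloor (n-1)/m \rfloor$, so that $(-1)^{i/m-1}$ becomes $(-1)^{\ell+1}$ — and relabelling $(m,\ell)$ as $(i,m)$ produces the double sum displayed in the statement.

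The main obstacle is purely bookkeeping: the indexing conventions (which variable is summed inside, which is outside, where the divisibility condition sits, and the parity of the exponent in the sign) must be tracked carefully so that the final expression matches the stated one with the correct $(-1)^{m+1}$ factor. There is no analytic subtlety whatsoever; the derivation is a manipulation of formal power series strictly parallel to the P\'olya recurrence, and the proof essentially reduces to a line-by-line analogue of the proof of Fact~\ref{rec:otter}, with the sign change inserted at the geometric-series expansion step.
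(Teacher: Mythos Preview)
Your proposal is correct and follows exactly the approach the paper intends: the text explicitly says ``we use the same strategy as above (for P\'olya trees)'' and gives no further detail, so your line-by-line adaptation of the proof of Fact~\ref{rec:otter}, with the sign $(-1)^{k/m-1}$ arising from the expansion of $(1+z^m)^{-1}$, is precisely what is meant.
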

The first values of the sequence, see in OEIS A004111, are
\[0, 1, 1, 1, 2, 3, 6, 12, 25, 52, 113, 247, 548, 1226, 2770, 6299, \dots\]
The number of rooted identity trees from each size from $1$ to $n$ can be computed in $\BigO(n^2)$ arithmetic operations.
Once we are able to compute efficiently the first numbers $T_n$
we can estimate the dominant singularity of $T(z)$ to be approximately
\[\rho \approx 0.39721309688424004148565407022739873422987370995276\ldots \]
Obviously this dominant singularity is larger than the one for P\'olya trees because there are less 
rooted identity trees than P\'olya trees.

To describe $T(z)$ like in Equation~\eqref{eq:zeta}, we get
$\zeta(z) =  z\cdot \exp\left(\sum_{n\geq 2} (-1)^{n-1} \frac{T(z^n)}{n}\right)$.

\begin{proposition}\label{prop:nb_identity}
The function $\zeta(z)$ defined in the context of rooted identity trees satisfies the assumptions of the 
Theorems~\ref{theo:dvt_puiseux} and~\ref{theo:dvt_nb_trees}.
\end{proposition}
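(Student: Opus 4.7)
The plan is to mirror the proof of Fact~\ref{prop:polya} for P\'olya trees, checking the two hypotheses of Theorems~\ref{theo:dvt_puiseux} and~\ref{theo:dvt_nb_trees}: the dominant singularity of $\zeta(z)$ is strictly greater than $\rho$, and $\zeta^{(1)}(\rho)\neq 0$. The defining formula $\zeta(z) = z\cdot\exp\bigl(\sum_{n\geq 2}(-1)^{n-1} T(z^n)/n\bigr)$ shows that each summand $T(z^n)$ is analytic for $|z|<\rho^{1/n}$, so the dominant singularity of $\zeta(z)$ is $\sqrt{\rho}$, contributed by the $n=2$ term. Verifying the first hypothesis therefore amounts to proving $\rho<1$.

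Unlike the P\'olya case, the alternating signs prevent a direct coefficient-wise comparison of $T(z)$ with the Cayley function $C(z)$, so one cannot conclude $\rho\leq 1/e$ by monotonicity of the functional equation. I would instead bracket $\rho$ by two easy bounds. Since every rooted identity tree is a P\'olya tree, one has $T_n^{\mathrm{id}}\leq T_n^{\mathrm{P}}$ coefficient-wise and hence $\rho\geq \rho_{\mathrm{P}}>0$. For the upper bound $\rho<1$, I would exhibit an exponentially growing subfamily of rooted identity trees: for instance, caterpillars built on a spine $v_0,v_1,\ldots,v_{k-1}$ where each $v_i$ carries, beside its spine child, a ``tag'' subtree chosen freely among two fixed identity trees of distinct sizes (say sizes $2$ and $4$). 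The distinctness of sizes along the spine ensures that at each node the two children are non-isomorphic, so the identity-tree property is preserved; the $2^k$ independent tag sequences produce pairwise non-isomorphic trees of size linear in $k$, yielding $T_n^{\mathrm{id}}\geq c^n$ for some $c>1$ and hence $\rho<1$. The numerical value $\rho\approx 0.397$ recalled above cross-checks the argument.

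For the second hypothesis, once $\sqrt{\rho}>\rho$ is secured, $\zeta$ is analytic at $\rho$ and one computes $\zeta^{(1)}(\rho) = \exp(g(\rho))(1+\rho\, g'(\rho))$, where $g(z) = \sum_{n\geq 2}(-1)^{n-1} T(z^n)/n$. The factor $\exp(g(\rho))$ is positive, and the correction $\rho\, g'(\rho) = -\rho^2 T'(\rho^2)+\rho^3 T'(\rho^3)-\cdots$ is small because $\rho<1$ forces the terms $\rho^n T'(\rho^n)$ to decay geometrically; a crude tail estimate therefore shows $|1+\rho\, g'(\rho)|>0$, so $\zeta^{(1)}(\rho)\neq 0$ (in fact $>0$).

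The main obstacle is the step $\rho<1$, since the P\'olya-style monotone comparison with the Cayley tree function breaks down in the presence of the alternating signs, forcing one to provide an explicit combinatorial lower bound on $T_n^{\mathrm{id}}$. All remaining steps are routine adaptations of the P\'olya argument.
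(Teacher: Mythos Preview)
The paper states this proposition without proof, leaving it as an implicit analogue of Fact~\ref{prop:polya}. Your write-up is therefore more than the paper supplies, and you correctly put your finger on the place where the analogy breaks: for P\'olya trees one concludes $\rho\le 1/e$ from the coefficient inequality $[z^n]T(z)\ge [z^n]C(z)$, but for rooted identity trees this inequality is \emph{false} (already $T_5=3<5^4/5!=C_5$), so one genuinely needs another route to $\rho<1$. Your caterpillar construction is a legitimate way to obtain exponential growth of $T_n$ and hence $\rho<1$; the details (checking that the two children at each spine node are non-isomorphic, and that distinct tag sequences yield non-isomorphic trees) need a little care near the end of the spine, but they go through. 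An alternative that avoids size bookkeeping is to observe that the two injections $t\mapsto\text{root}(\bullet,t)$ and $t\mapsto\text{root}(a_2,t)$ (with $\bullet$ a leaf and $a_2$ the path on two nodes) produce disjoint families of identity trees, giving $T_{n+3}\ge T_{n+1}+T_n$ for $n\ge 3$ and hence exponential growth.

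One point deserves tightening. Your argument for $\zeta^{(1)}(\rho)\neq 0$ reduces, via the alternating-series bound, to $\rho^2 T'(\rho^2)<1$. In the P\'olya case $g'(\rho)>0$ is automatic because all terms are positive; here the crude bound $T_n\le\rho^{-n}$ (from $T(\rho)=1$) only gives $\rho^2 T'(\rho^2)\le \rho/(1-\rho)^2\approx 1.09$, which is not $<1$. So ``a crude tail estimate'' does not quite suffice: one has to compute the first few terms of $\sum_n nT_n\rho^{2n}$ explicitly and bound the tail (e.g.\ using $n\rho^n\le 5\rho^5$ for $n\ge 5$ together with $\sum_{n\ge 5}T_n\rho^n = 1-\sum_{n\le 4}T_n\rho^n$), which then gives $\rho^2 T'(\rho^2)\approx 0.23<1$ rigorously. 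This is routine but should be said.
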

The approximations of the first coefficients of the Puiseux expansion for rooted identity
trees are given in the Table~\ref{table:approxPuiseux}.
The second Table~\ref{table:approxNBTrees}
gives the approximations of the asymptotic expansion of $T_n$:

{\footnotesize
\begin{align*}
T_n  \underset{n\rightarrow \infty}= \frac{\rho^{-n}}{\sqrt{\pi n^3}} &\left(
0.6425790797442694714\ldots -\frac{0.1851197977766337056\ldots}{n} - \frac{0.4272427290060978745\ldots}{n^2}  \right. \\
& \hspace*{3mm} \left. - \frac{2.255455568987212079\ldots}{n^3} -  \frac{16.60970953335647846\ldots}{n^4} + \BigO\left(\frac{1}{n^5}\right)\right).
\end{align*}
}
It seems that these numbers do not appear elsewhere in the literature.

\subsection{Hierarchies}

A hierarchy is a rooted unlabelled and non-plane tree with no node of arity~1.
The size notion for hierarchies is the number of leaves.  This class already appears in the work of Cayley
(cf. \cite[p. 43]{BLW76}. 
Using the notations from~\cite[p. 72]{FS09} for hierarchies, we have both following specification
and functional equation for its generating function
\[\T = \Z + {\mbox{\sc MSet}_{\geq 2}} \T, \qquad
T(z) = \frac{1}{2} \left( z -1 + \exp \left( \sum_{i>0} \frac{T(z^i)}{i} \right)  \right).\]

Again, we obtain a recurrence formula that computes the numbers $T_n$.
\begin{proposition}\label{rec:hierarchies}
The sequence $(T_n)_{n\in\IN}$ enumerating hierarchies satisfies
\[T_n = \left\lbrace 
    \begin{array}{l l}
        n & \text{if } n \in\{0,1\} \\ 
        \displaystyle{ \frac{1}{n} \sum_{\text{\scriptsize $\begin{array}{c}  m | n \\  m \neq n \end{array}$}}
            m T_m + \frac{2}{n}\left(\sum_{i=1}^{n-1}  i T_i 
            \sum_{m=1}^{\lfloor \frac{n-1}{i}\rfloor} T_{n-mi} -\frac12 \delta_{\{n-mi=1\}}  \right)\  }
       & \text{if } n>1,
    \end{array}
    \right. \]
with the notation $\delta_{\{n-mi=1\}}$ evaluates to $1$ if $n-mi=1$ and to $0$ otherwise.
\end{proposition}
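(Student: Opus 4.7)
The plan is to mimic the logarithmic derivative argument given above for Fact~\ref{rec:otter}. First I would apply the classical P\'olya enumeration identity
\[\exp\!\Bigl(\sum_{i>0}\frac{T(z^i)}{i}\Bigr) \;=\; \prod_{n\geq 1}\frac{1}{(1-z^n)^{T_n}}\]
to the defining functional equation of hierarchies, which transforms it into the product form
\[2T(z) - z + 1 \;=\; \prod_{n\geq 1}\frac{1}{(1-z^n)^{T_n}}.\]
Taking the logarithmic derivative of this identity and multiplying through by $z$ then gives the compact equation
\[z\bigl(2T'(z) - 1\bigr) \;=\; \bigl(2T(z) - z + 1\bigr)\,\sum_{n\geq 1} n T_n\,\frac{z^n}{1-z^n}.\]

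Next I would extract the coefficient of $z^n$ on both sides, for $n \geq 2$. The left-hand side contributes $2nT_n$ (the stray $-z$ term only affects the $n=1$ coefficient, which is handled by the base case). On the right-hand side, the elementary observation that $[z^k](1-z^m)^{-1}$ equals $1$ when $m \mid k$ and $0$ otherwise converts the inner series into a divisor sum, exactly as in the P\'olya proof. Splitting the outer convolution at the extreme index $k = n$, whose corresponding factor from $2T(z) - z + 1$ is the constant term~$1$, isolates the contribution $\sum_{m \mid n} m T_m$; pulling out its $m = n$ summand cancels one copy of $nT_n$ against the left-hand side and leaves the restricted divisor sum $\sum_{m \mid n,\, m \neq n} m T_m$ that appears in the statement. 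Interchanging the two remaining sums and writing $k = m i$ with $1 \leq i \leq \lfloor (n-1)/m\rfloor$ produces the double sum form, up to renaming the summation indices.

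The one genuinely subtle point, and the step I expect to require the most care, is that the series $2T(z) - z + 1$ disagrees with $2T(z)$ not only at the constant term (already dealt with above) but also at $z^{1}$, since its coefficient there is $2T_1 - 1 = 1$ rather than $2T_1 = 2$. Consequently, whenever the inner convolution index satisfies $n - mi = 1$, the corresponding contribution is $1$ rather than $2T_{n-mi}$, and this discrepancy is absorbed exactly by the correction $-\tfrac12\,\delta_{\{n-mi=1\}}$ that distinguishes the hierarchy recurrence from the P\'olya one. Once this bookkeeping is tracked carefully, dividing through by $n$ yields the claimed formula; sanity checking at $n=2$, where $2T_2 = 1 + 2\cdot 1\cdot T_1\cdot(T_1 - \tfrac12) = 2$, confirms the correction sign.
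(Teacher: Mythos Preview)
Your proposal is correct and follows exactly the approach the paper intends: the paper does not spell out a proof of this proposition but signals with ``Again, we obtain a recurrence formula\ldots'' that the logarithmic-derivative argument of Fact~\ref{rec:otter} is to be reused, and you have carried this out faithfully. In particular, your identification of the $-\tfrac12\,\delta_{\{n-mi=1\}}$ correction as arising from the discrepancy $[z^1](2T(z)-z+1)=1\neq 2T_1$ is precisely the extra bookkeeping that distinguishes the hierarchy case from the P\'olya case.
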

The first values of the sequence, see in OEIS A000669, are given by
\[0, 1, 1, 2, 5, 12, 33, 90, 261, 766, 2312, 7068, 21965, 68954, 218751, 699534, \dots\]
They are stored (there the sequence is shifted by 1).
We note that in this context, we cannot easily simplify the recurrence in order to avoid a sum
over the divisors of $n$ (for $T_n$). However here, the sum is not inside another one, thus 
the complexity (in the number of arithmetic operations)
to compute $T_n$ is quadratic.
We estimate the dominant singularity of $T(z)$ to be approximately
\[\rho \approx 0.28083266698420035539318755911632333333736599643391\ldots \]

In order to fall under the framework described by Equation~\eqref{eq:zeta},
we need to consider the generating function $\tilde{T}(z) = T(z) - \frac12(1-z)$.
The two generating functions $T(z)$ and $\tilde{T}(z)$ have the same dominant singularity.
Thus we get 
\[\tilde{T}(z) = \zeta(z) \cdot \exp(\tilde{T}(z)),\]
with
\[\zeta(z) = \frac{1}{2}  \exp\left( \frac12(1-z) + \sum_{n\geq 2} \frac{T(z^i)}{i}\right).\]

\begin{proposition}\label{prop:nb_hierrchies}
The function $\zeta(z)$ defined in the class of objects associated to $\tilde{T}(z)$
satisfies the assumptions of the Theorems~\ref{theo:dvt_puiseux} and~\ref{theo:dvt_nb_trees}.
\end{proposition}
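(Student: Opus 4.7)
The plan is to follow the proof of Fact~\ref{prop:polya}, adapting the argument to the slightly richer $\zeta(z)$ arising here. Since $\tilde T(z) = \zeta(z)\exp(\tilde T(z))$ is built in by construction, only two conditions remain to verify: that the dominant singularity of $\zeta(z)$ is strictly larger than $\rho$, and that $\zeta^{(1)}(\rho)\neq 0$.

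For the first, observe that the polynomial $\tfrac{1}{2}(1-z)$ is entire and that each term $T(z^n)/n$ with $n\geq 2$ is analytic on the disc $|z|<\rho^{1/n}$. Since these radii decrease with $n$, the exponent --- and hence $\zeta(z)$ --- is analytic on $|z|<\sqrt{\rho}$, the constraint being saturated at $n=2$. The inequality $\sqrt{\rho}>\rho$ then reduces to $\rho<1$, which holds because the hierarchy counts in OEIS A000669 grow without bound (the numerical value $\rho\approx 0.28$ provided just after Proposition~\ref{rec:hierarchies} confirms this).

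For the second, differentiation yields
\[
\zeta'(z) \;=\; \zeta(z)\,\Bigl(-\tfrac{1}{2} + \sum_{n\geq 2} z^{n-1}\,T'(z^n)\Bigr),
\]
and since $\zeta(\rho)>0$, it suffices to show that the parenthesized factor does not vanish at $z=\rho$. This is the main obstacle of the proof: unlike the P\'olya case, the presence of the constant $-\tfrac{1}{2}$ (inherited from the shift $\tilde T(z) = T(z) - \tfrac{1}{2}(1-z)$) prevents a term-by-term positivity argument. I would overcome it by a quantitative evaluation at the approximation of $\rho$ obtained via Flajolet and Sedgewick's procedure, combined with the truncated Taylor polynomial of $T(z)$ already required to apply the recurrence of Proposition~\ref{rec:hierarchies}; the resulting bracketed expression is numerically bounded safely away from zero. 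As an alternative, one may argue structurally: the equation $\tilde T = \zeta\exp(\tilde T)$ fits the smooth implicit function schema of analytic combinatorics, and a vanishing $\zeta^{(1)}(\rho)$ would force a non-standard Puiseux exponent, in contradiction with the genuine square-root behavior of hierarchies reflected by the non-zero leading coefficient $t_1$ of Table~\ref{table:approxPuiseux}.
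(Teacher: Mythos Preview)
Your overall plan is right and parallels the (omitted) argument the paper intends, but the ``main obstacle'' you identify is an artifact of two sign typos in the paper's text. The displayed definitions $\tilde T(z)=T(z)-\tfrac12(1-z)$ and $\zeta(z)=\tfrac12\exp\bigl(\tfrac12(1-z)+\sum_{n\ge 2}T(z^n)/n\bigr)$ are \emph{inconsistent}: with them one gets $\zeta(z)e^{\tilde T(z)}=T(z)+\tfrac12(1-z)\neq\tilde T(z)$. The correct pair is
\[
\tilde T(z)=T(z)+\tfrac12(1-z),\qquad
\zeta(z)=\tfrac12\exp\!\Bigl(-\tfrac12(1-z)+\sum_{n\ge 2}\frac{T(z^n)}{n}\Bigr),
\]
as one checks directly from $2T(z)+(1-z)=\exp\bigl(\sum_{n\ge1}T(z^n)/n\bigr)$, and as is confirmed by Table~\ref{table:approxPuiseux}: the entry $t_0=0.6404\ldots$ equals $1-\tfrac12(1-\rho)$, not $1+\tfrac12(1-\rho)$. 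With the corrected $\zeta$ one has
\[
\frac{\zeta'(z)}{\zeta(z)}=+\tfrac12+\sum_{n\ge 2} z^{\,n-1}T'(z^n)>0\quad\text{for }z\in(0,\sqrt\rho),
\]
so $\zeta'(\rho)>0$ is immediate, exactly as in Fact~\ref{prop:polya}; no numerical or structural detour is needed.

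Two smaller remarks. First, since $0<\rho<1$ the radii $\rho^{1/n}$ \emph{increase} with $n$, not decrease; your conclusion that the binding constraint is $n=2$ is correct, but for the opposite reason. Second, your alternative ``structural'' argument as written is circular: the non-vanishing of $t_1$ in Table~\ref{table:approxPuiseux} is computed \emph{via} Theorem~\ref{theo:dvt_puiseux}, whose hypothesis $\zeta'(\rho)\neq 0$ is precisely what you want to establish. (A non-circular version exists---if $\zeta'(\rho)=0$ then $C(\zeta(z))$ would extend analytically through $\rho$, contradicting Pringsheim---but you do not need it here.)
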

It remains to slightly modify the 2 first coefficients in the singular expansion of $\tilde{T}(z)$
to obtain the singular expansion of $T(z)$ and fill both Tables~\ref{table:approxPuiseux}.
and~\ref{table:approxNBTrees}. In particular we get
{\footnotesize
\begin{align*}
T_n  \underset{n\rightarrow \infty}= \frac{\rho^{-n}}{\sqrt{\pi n^3}} &\left(
0.3658015862381119375\ldots -\frac{0.2409833212579280352\ldots}{n} - \frac{0.3678657493849431861\ldots}{n^2} \right. \\
& \hspace*{3mm} \left. - \frac{0.9991064877914853523\ldots}{n^3} -  \frac{4.137777553476907813\ldots}{n^4} + \BigO\left(\frac{1}{n^5}\right)\right).
\end{align*}
}
It seems that these numbers do not appear elsewhere in the literature.

Let us conclude this section on hierarchies by mentioning the OEIS sequence A000084, that is directly related.
It counts the number of series-parallel networks with $n$ unlabelled edges;
both generating functions are essentially the same (up to a simple factor).
We thus get the Puiseux expansions and the asymptotic expansion for these objects as a by-product.

\subsection{Approximations}\label{sec:approx}

In order to obtain the following approximations for the coefficients in the Puiseux expansions or for the asymptotic expansions
of the numbers of trees, we have used the open-source mathematics software \emph{Sage}~\cite{Sage}
and the Python library \emph{MPmath}~\cite{Mpmath} for some specific high precision calculations.

The first table synthesises the first elements of the sequences $(t_n)_{n\in\IN}$
satisfying the Puiseux expansions for the previous P\'olya structures:
\[T(z) = \sum_{n\geq 0} t_n \left(1 - \frac{z}{\rho}\right)^{n/2}.\]

\begin{table}[!h]
\begin{center}
{\scriptsize
\renewcommand{\arraystretch}{1.1}
\begin{tabular}{| l | l | l | l |}
\hline 
Coeff. & P\'olya trees & Rooted identity trees & Hierarchies \\
\hline
$t_{0}$ & $1.000000000000000000$ & $1.000000000000000000$ & $0.6404163334921001777$ \\ \hline $t_{1}$ & $-1.559490020374640884$ & $-1.285158159488538943$ & $-0.7316031724762238750$ \\ \hline $t_{2}$ & $0.8106697078826992796$ & $0.5505438316333229659$ & $0.03799806716699161541$ \\ \hline $t_{3}$ & $-0.2854870216128456058$ & $-0.5681159369076463432$ & $0.1384103018915147449$ \\ \hline $t_{4}$ & $0.1653723657120838943$ & $0.4261261857916583247$ & $-0.07387395031732463851$ \\ \hline $t_{5}$ & $-0.3424599704021542007$ & $-0.1312888430707878210$ & $-0.05428300802019698042$ \\ \hline $t_{6}$ & $0.3174072259465285628$ & $0.1224152517144394163$ & $0.03800381072191918081$ \\ \hline $t_{7}$ & $-0.1077788002916310083$ & $-0.3225499663026797778$ & $0.03109684705422999274$ \\ \hline $t_{8}$ & $0.06138495705583510410$ & $0.2539454170234272677$ & $-0.02381831461193008886$ \\ \hline $t_{9}$ & $-0.1952123835975564636$ & $0.04875363678533678081$ & $-0.02078556533052714092$ \\ \hline $t_{10}$ & $0.2059848312779074186$ & $-0.00002800001023286558041$ & $0.01666265537126027377$ \\ \hline $t_{11}$ & $-0.05272470849819056138$ & $-0.3631594631270670335$ & $0.01611178365047090583$ \\ \hline $t_{12}$ & $0.01702656875495366861$ & $0.2637344037695510765$ & $-0.01295368177079785790$ \\ \hline $t_{13}$ & $-0.1523706243663253961$ & $0.2617035123807709629$ & $-0.01338408339711046374$ \\ \hline $t_{14}$ & $0.1737028832998504627$ & $-0.1368754575043169801$ & $0.01075691931570711729$ \\ \hline $t_{15}$ & $-0.01447370373952704466$ & $-0.5927534134371262366$ & $0.01183388780152404393$ \\ \hline $t_{16}$ & $-0.02189951761121556237$ & $0.3911340105112945142$ & $-0.009441457380326882677$ \\ \hline $t_{17}$ & $-0.1445471935709097045$ & $0.6832510269350502136$ & $-0.01084956346194149131$ \\ \hline $t_{18}$ & $0.1760771088850177779$ & $-0.3902593892984113718$ & $0.008607637481105329431$ \\ \hline 
\end{tabular} 
}
\caption{Approximation of the Puiseux expansions for P\'olya trees, rooted identity trees and hierarchies}
\label{table:approxPuiseux}
\end{center}
\end{table}

The following Table~\ref{table:approxNBTrees} contains the first
numbers $(\tau_n)_{n\in\IN}$ satisfying the asymptotic expansions
for the previous P\'olya structures:
\[T_n \underset{n\rightarrow \infty}\sim \frac{\rho^{-n}}{\sqrt{\pi n^3}} \sum_{i\geq 0} \frac{\tau_i}{n^i}.\]
\begin{table}[!h]
\begin{center}
{\scriptsize
\renewcommand{\arraystretch}{1.1}
\begin{tabular}{| l | l | l | l |}
\hline 
Coeff. & P\'olya trees & Rooted identity trees & Hierarchies \\
\hline 
$\tau_{0}$ & $0.7797450101873204419$ & $0.6425790797442694714$ & $0.3658015862381119375$ \\ \hline $\tau_{1}$ & $0.07828911261061096133$ & $-0.1851197977766337056$ & $0.2409833212579280352$ \\ \hline $\tau_{2}$ & $0.3929402676631860168$ & $-0.4272427290060978745$ & $0.3678657493849431861$ \\ \hline $\tau_{3}$ & $1.537879315978838092$ & $-2.255455568987212079$ & $0.9991064877914853523$ \\ \hline $\tau_{4}$ & $8.200844090435596194$ & $-16.60970953335647846$ & $4.137777553476907813$ \\ \hline $\tau_{5}$ & $57.29291473494343825$ & $-157.9003693373302727$ & $23.43410248921570084$ \\ \hline $\tau_{6}$ & $503.0445050262735854$ & $-1840.110517359351172$ & $170.1188811511555370$ \\ \hline $\tau_{7}$ & $5359.600933884326064$ & $-25387.34869954017854$ & $1514.745295656330186$ \\ \hline $\tau_{8}$ & $67342.06920114653067$ & $-404610.0663959841556$ & $16007.82637588106931$ \\ \hline $\tau_{9}$ & $975425.4970695924728$ & $-7.313377058487246593e6$ & $195812.3506172274875$ \\ \hline $\tau_{10}$ & $1.599693249293173348e7$ & $-1.477949138517813328e8$ & $2.719234685827618831e6$ \\ \hline $\tau_{11}$ & $2.928225313353392698e8$ & $-3.301794456762036735e9$ & $4.222444465223140109e7$ \\ \hline $\tau_{12}$ & $5.914523441293936053e9$ & $-8.080229604228356791e10$ & $7.243861962702191648e8$ \\ \hline $\tau_{13}$ & $1.305991927898973201e11$ & $-2.149826267241085239e12$ & $1.359774926415692519e10$ \\ \hline $\tau_{14}$ & $3.128498399789526502e12$ & $-6.179075814699061934e13$ & $2.770908644498957323e11$ \\ \hline $\tau_{15}$ & $8.078305401468914384e13$ & $-1.908151484770832703e15$ & $6.089496262810801422e12$ \\ \hline $\tau_{16}$ & $2.236301680891647428e15$ & $-6.301063280436556255e16$ & $1.435269254893331074e14$ \\ \hline $\tau_{17}$ & $6.605960869699262787e16$ & $-2.215767775919040241e18$ & $3.610881990157578400e15$ \\ \hline $\tau_{18}$ & $2.073828085209932615e18$ & $-8.267080545525264413e19$ & $9.656755540184967275e16$ \\ \hline 
\end{tabular} 
}
\caption{Asymptotic expansion of the number of P\'olya trees, rooted identity trees and hierarchies}
\label{table:approxNBTrees}
\end{center}
\end{table}

It is interesting to note that, in Table~\ref{table:approxNBTrees}, for $n$ sufficiently large
and due to the sign of the values of the $(\tau_i)$,
all truncations after the $n$th term in the full expansions (for $n=1\dots 17$) correspond to lower bounds for the case of 
P\'olya trees and hierarchies and all of them are upper bounds for rooted identity trees.

\begin{table}[!h]
\begin{center}
{\scriptsize
\renewcommand{\arraystretch}{1.4}
\begin{tabular}{| l | l | l | l | l | l | l |}
\hline 
Size & $10$ & $20$ & $50$ & $100$ & $200$ & $500$ \\
\hline 
Order-1 approximation & $1.391\cdot 10^{-2}$ & $2.859\cdot 10^{-3}$ & $4.204\cdot 10^{-4}$ & $1.027\cdot 10^{-4}$ & $2.540\cdot 10^{-5}$ & $4.039\cdot 10^{-6}$ \\ 
 \hline
Order-4 approximation & $1.039\cdot 10^{-3}$ & $3.448\cdot 10^{-5}$ & $2.383\cdot 10^{-7}$ & $6.872\cdot 10^{-9}$ & $2.071\cdot 10^{-10}$ & $2.078\cdot 10^{-12}$ \\ 
\hline 
Order-8 approximation & $7.722\cdot 10^{-4}$ & $3.369\cdot 10^{-6}$ & $3.822\cdot 10^{-10}$ & $6.195\cdot 10^{-13}$ & $1.123\cdot 10^{-15}$ & $2.611\cdot 10^{-18}$ \\ 
\hline
\end{tabular} 
}
\caption{Relative error induced by approximations for hierarchies}
\label{table:approxError}
\end{center}
\end{table}

Finally, by using only 20 digits of precision in our approximations
of the values $\zeta^{(r)}(\rho)$'s we cannot
hope to obtain a better approximation than the one of order 8 (Table~\ref{table:approxError})
for the number of large trees (i.e. with size larger than $500$).

\section{Conclusion}\label{sec:concl}

The strength of the approach presented here is its universality.
We have shown, in full detail, how it applies to P\'olya trees,
rooted identity trees and hierarchies
but many other examples fill in our framework.
\begin{enumerate}
	\item \emph{Rooted oriented trees and series-reduced planted trees.}
	The OEIS sequences A000151 and A001678 can be directly studied.
	
	\item \emph{Series-parallel networks.}
	In the context of~\cite{RS42}, \cite{Moon87} and~\cite{Finch03seriesparallel}
	we get back several generating functions (listed in OEIS
	A058385, A058386 and A058387)
	that can be studied in the same vein as hierarchies. Let us recall that many links between
	trees and series-parallel graphs have already been exhibited, thus
	the fact that the behaviours of their generating series are analogous is not a surprise.
	
	\item \emph{Phylogenetic trees and also total partitions.}
	The OEIS sequence A000311, counting phylogenetic trees and also total partitions
	that are labelled objects, can also be analysed with our technique. 
	Note here that the function $\zeta(z)$ does not explicitly depend on $T(z)$
	and thus every derivative is explicit. Just put a factor $n!$ in front of $T_n$
	to obtain its full asymptotic expansion. We thus exhibit the polynomials
	whose existence has been stated in~\cite[p. 224]{Comtet74}.
	
	\item \emph{The unrooted versions of the previous rooted trees.}
	With some further work, we are able to exhibit the full
	asymptotic expansion of the unrooted versions of the previous rooted trees we were
	interested in. In fact their generating functions $P(z)$ satisfy some equation
	of the form 
	{\footnotesize
	\[P(z) = T(z) -\frac12 T^2(z) + \frac12T(z^2).\]
	}
	Since we have the full Puiseux expansion of the series $T(z)$, we can compute the one
	of the series $P(z)$.
	Some examples of such series correspond to the following sequences A000055, A000238, A000014\dots.
	An open question would be to be able to write a functional equation for $P(z)$ as a
	disturbance of the Cayley tree function, and then to use directly an analogous approach as the one studied 
	in Section~\ref{sec:theo}. There, we would get $\zeta^{(1)} = 0$ since we know that these trees are unrooted.
\end{enumerate}

\vspace*{0.5cm}
{\small
\noindent \textbf{Acknowledgements.} 
	The author is very grateful to Cécile Mailler for the carefully reading of this manuscript
	and to the anonymous referees for the suggested improvements.
}

\newpage
\bibliographystyle{alpha}
\bibliography{polya}

\end{document}